\newtheorem{theorem}{Theorem}%[section]
\newtheorem{lema}{Lemma}%[section]
\newtheorem{corollary}{Corollary}%[section]
\newcommand{\ad}[1]{\stackrel{{\scriptscriptstyle\textsf{v}}}{#1}}
\newcommand{\Ps}{\mathbf{P}_s^{-1}}
\newcommand{\ps}{\ad{\mathbf{p}}_-(s)}
\newcommand{\qs}{\ad{\mathbf{q}}_-(s)}
\newcommand{\Rs}{\ad{\mathbf{R}}_-(s)}
\newcommand{\Rcs}{\ad{\mathbf{R}}_c(s)}
\newcommand{\Ro}{\mathbf{p}^-_*(0)}
\begin{document}
\thispagestyle{myheadings} \markboth{Overshoot functionals for
almost semi-continuous processes}{Theor. Probability and Math.
Statist. No. 76 (2008), 49-57.}
\bigskip
 {\noindent \bf \large   OVERSHOOT FUNCTIONALS FOR ALMOST
 SEMI-CONTINUOUS\\ PROCESSES DEFINED ON A MARKOV CHAIN}
 \footnotemark[1] \footnotetext{This is an
electronic reprint of the original article published in Theor.
Probability and Math. Statist. No. 76 (2008), 49-57. This reprint
differs from the original in pagination and typographic detail.}

\bigskip
\bigskip
 { \bf  Ievgen~Karnaukh
 \footnote{Department of Probability Theory and Mathematical Statistics,
    Faculty of Mathematics and Mechanics,
    Taras Shevchenko Kyiv National University,
    Glushkova str., 6,
    Kyiv 03127,
    Ukraine.
 \href{mailto:kveugene@mail.ru}{kveugene@mail.ru}
 }}\hskip 10 cm UDC 519.21
\begin{center}
\bigskip
\begin{quotation}
\noindent  {\small In the article the distributions of overjump
functionals for almost semi-continuous processes on a finite
irreducible Markov chain are considered.}\footnotetext{{\emph{AMS
2000 subject classifications}}.  Primary 60G50, 60J70; Secondary
60K10, 60K15.} \footnotetext{{\emph{ Key words and phrases: }}
Overjump functionals, almost semi-continuous processes, Ruin
probability.}
\end{quotation}
\end{center}
\bigskip

The distribution of extrema and overjump functionals for the
semi-continuous processes (processes that intersect positive or
negative level continuously) on a Markov chain were considered by
many authors( for instance, see~\cite{Gusak2}~-~\cite{Gusak1}). In
the paper~\cite{Gusak5} the distribution of extrema for almost
semi-continuous processes were treated (the processes that intersect
 positive or negative level by exponentially distributed jumps).
Under some conditions these processes we can consider as surplus
risk processes with stochastic premium function in a Markov
environment. In the article the distribution of some overjump
functionals for the almost semi-continuous processes defined on a
Markov chain are considered.

Consider a two-dimensional Markov process:
$$Z(t)=\{\xi(t),x(t)\}\quad t\ge 0,$$
\noindent where $x(t)$ is a finite irreducible nonperiodic Markov
chain with the set of states ${E}'=\{1, \ldots, m\}$ and the matrix
of transition probabilities
\begin{equation*}
\mathbf{P}(t)=e^{t \mathbf{Q}},\qquad t\ge0,\qquad
\mathbf{Q}=\mathbf{N}(\mathbf{P}-\mathbf{I}),
\end{equation*}
where $\mathbf{N}=||\delta_{kr}\,\nu_k||^{m}_{k,r=1}$, $\nu_k$ are
the parameters of exponentially distributed random variables
$\zeta_k$ (the sojourn time of $x(t)$ in the state $k$),
$\mathbf{P}=\|p_{kr}\|$ is the matrix of transition probabilities of
the imbedded chain, $\boldsymbol{\pi}=(\pi_1,\ldots,\pi_m)$ is the
stationary distribution. $\xi(t)$ is a process with stationary
conditionally independent increments for fixed values of $x(t)$
(see~\cite{Gusak2}).
\par The evolution of the process $Z(t)$ is described by the matrix
characteristic function:
$$\boldsymbol{ \Phi}_t(\alpha)=
\|\mathrm{E}[e^{\imath \alpha (\xi(t+u)-\xi(u))},x(t+u)=
r/x(u)=k]\|,\qquad u\geq 0,$$ which we can represent as follows
$$\boldsymbol{ \Phi}_t(\alpha)=\mathbf{E}e^{\imath \alpha \xi(t)}=
e^{ t \boldsymbol{\Psi}(\alpha)},\quad
\boldsymbol{\Psi}(0)=\mathbf{Q}.$$
In what follows, we consider processes that have cumulant
\begin{equation}\label{eq1}
  \boldsymbol{\Psi }(\alpha )=\int_0^{\infty}
  \left(e^{\imath\alpha x}-1 \right)d\mathbf{K}_0(x)
  +\boldsymbol{\Lambda }\mathbf{F}_0(0)
  \left(\mathbf{C}\left(\mathbf{C}+\imath\alpha \mathbf{I}\right)^{-1}
  -\mathbf{I} \right)+\mathbf{Q},
\end{equation}
where
$d\mathbf{K}_0(x)=\mathbf{N}d\mathbf{F}(x)+\boldsymbol{\Pi}(dx)$,
$$\mathbf{F}(x)=\|\mathrm{P}\{\chi_{kr}<x;x(\zeta _1)=r/x(0)=k\}\|,$$
 $\chi _{kr}$
are the jumps of $\xi (t)$ at the time of transition of $x(t)$ from
the state $k$ to the state $r$.
$$\boldsymbol{\Pi}(dx)=\boldsymbol{\Lambda }d\mathbf{F}_0(x),\;\;
\mathbf{F}_0(x)=\|\delta _{kr}F^0_k(x)\|,$$ $F^0_k(x)$ are the
distribution functions of the jumps of $\xi (t)$ if
 $x(t)=k$,
$\boldsymbol{\Lambda}=\|\delta _{kr}\lambda _k\|$, $\lambda _k$ are
the parameters of exponentially distributed random variables
$\zeta'_k$ (the time interval between two neighboring jumps of $\xi
(t) $ if $x(t)=k$). $\mathbf{C}=\left |\left |\delta
_{kr}c_{k}\right |\right |$, where $c_k$ are the parameters of
exponentially distributed positive jumps of  $\xi (t)$ if $x(t)=k$.
The process $Z(t)$ with this cumulant is the almost
lower-semicontinuous process defined in~\cite[p.43]{Gusak5}.
\par Let $\theta _s$ denote an exponentially distributed random variable with
parameter $s>0$ $(\mathrm{P}\{\theta _s>t\}=e^{-s t}, t\ge 0 )$,
independent of $Z(t)$. In this case, we rewrite the characteristic
function of $\xi (\theta _s)$ as follows
\begin{equation}\label{eq2}
\boldsymbol{\Phi }(s,\alpha )=\mathbf{E}e^{\imath \alpha \xi(\theta
_s)}= s\int_0^\infty e^{-st}\boldsymbol{\Phi}_t(\alpha )dt
=s\left(s\mathbf{I}-\boldsymbol{ \Psi }(\alpha )\right)^{-1}.
\end{equation}
\par Denote the next functionals for $\xi (t)$:
\begin{gather*}
\begin{aligned}
\xi ^{\pm }(t)=\sup\limits_{0\leq u\leq t}(\inf)\xi (u),&\quad  \xi
^{\pm }=\sup
   \limits_{0\leq u\leq \infty}(\inf)\xi (u),\\
 \overline{\xi }(t)=\xi (t)-\xi ^+(t),&\quad \ad{\xi}(t)=\xi^- (t)-\xi(t);\\
\tau^+(x)=\inf\{t:\xi (t)>x\},  &\quad  \gamma ^+(x)=\xi (\tau ^+(x))-x; \\
 \gamma _+(x)=x-\xi (\tau ^+(x)-0),  &\quad  \gamma _x^+=\gamma ^+(x)+\gamma_+(x) .
\end{aligned}\;\;(x\geq 0)
\end{gather*}

The distributions of $\xi^{\pm}(\theta_s), \overline{\xi}(\theta_s),
\ad{\xi}(\theta_s)$ were concretely defined in~\cite{Gusak5}. The
aim of our article is to find joint moment generating function of
overjump functionals for almost lower semi-continuous processes and
the moment generating functions for pairs $\left\{\tau ^+(x),\gamma
^+(x)\right\}$, $\left\{\tau ^+(x)\right.$, $\left.\gamma
_+(x)\right\}$, $\left\{\tau ^+(x), \gamma ^+_x\right\}$.
\par Denote
\begin{gather*}
  \mathbf{V}(s,x,u,v,\mu )=
  \mathbf{E}\left [e^{-s\tau ^+(x)-u\gamma ^+(x)-v\gamma _+(x)-\mu \gamma ^+_x},\; \tau
     ^+(x)<\infty \right ],\\
  \mathbf{W}(x,u,v,\mu )=\int_x^{\infty }e^{(u-v)x-(u+\mu )z}d\mathbf{K}_0(z),
     \;\overline{\mathbf{K}}_0(x)=\mathbf{W}(x,0,0,0),\\
     \mathbf{P}_s=s\int_{0}^{\infty}e^{-st}\mathbf{P}(t)dt=s\left(s\mathbf{I}-\mathbf{Q}
     \right)^{-1},\\
           \mathbf{P}_+(s,x)=\mathbf{P}\left\{\xi ^+(\theta_s)<x\right\}, x>0;
\mathbf{P}^-(s,x)=\mathbf{P}\left\{\overline{\xi}(\theta_s)<x\right\},
x<0;\\
\tilde{\mathbf{P}}^0(s)=\mathbf{P}\left\{\xi(\theta_s)=0\right\},\;
 \mathbf{p}_{\pm}(s)=\mathbf{P}\left\{\xi
^{\pm}(\theta_s)=0\right\},\; \Rs=\mathbf{P}^{-1}_s\ps,\\
\ps=\mathbf{P}\left\{\overline{\xi}
(\theta_s)=0\right\}, \qs=\mathbf{P}_s-\ps,\;\Rcs=\Rs\mathbf{C},\\
  \mathbf{G}_+(s,x,u,v,\mu )=\int_{-\infty }^0 d\mathbf{P}^-(s,y)\mathbf{W}(x-y,u,v,\mu ).
\end{gather*}
\begin{lema}
  For process $Z(t)$ with cumulant{~\rm{(\ref{eq1})}} the next
  relation holds
\begin{equation}\label{eq3}
  s\mathbf{V}(s,x,u,v,\mu )=\int_0^x d\mathbf{P}_+(s,y)\Ps
  \mathbf{G}_+(s,x-y,u,v,\mu ),\quad x>0,
\end{equation}
where for $u-v\notin \sigma \left(\mathbf{P}^{-1}_s\ps\mathbf{C}
\right)$ (spectrum of matrix)
\begin{multline}\label{eq4}
\mathbf{G}_+(s,x,u,v,\mu )=\, \ps\int_x^{\infty }e^{(u-v)x-(u+\mu )z}d\mathbf{K}_0(z)-\\
  \shoveleft-\ps\mathbf{C} \left(\mathbf{P}^{-1}_s\ps\mathbf{C}
  -(u-v)\mathbf{I}\right)^{-1}e^{-(v+\mu )x}\cdot\int_0^{\infty}
  \biggl[  (u+\mu )e^{-(u+\mu )z}-\\
  \left.-\left(\mathbf{P}^{-1}_s\ps\mathbf{C}+(\mu+v)\mathbf{I}\right)
  e^{-\left(\mathbf{P}^{-1}_s\ps\mathbf{C}+(v+\mu
  )\mathbf{I}\right)z}\right]\Ps\qs\overline{\mathbf{K}}_0(x+z)dz.
\end{multline}
\end{lema}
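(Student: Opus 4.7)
The plan is to establish (\ref{eq3}) by a Wiener--Hopf--type factorization at the killing time $\theta_s$, decomposing the first-passage event $\{\tau^+(x)<\theta_s\}$ into the pre-crossing trajectory (whose behaviour is governed by the already-known distributions of $\xi^+(\theta_s)$ and $\overline{\xi}(\theta_s)$) and the overshooting jump (described by $d\mathbf{K}_0$). Formula (\ref{eq4}) then follows by exploiting the exponential structure of the negative jumps via the $\mathbf{C}$-term in (\ref{eq1}).

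First, by the independence and memorylessness of $\theta_s$,
$$\mathbf{V}(s,x,u,v,\mu) = \mathbf{E}\bigl[e^{-u\gamma^+(x)-v\gamma_+(x)-\mu\gamma_x^+};\,\tau^+(x) < \theta_s\bigr].$$
Since the cumulant (\ref{eq1}) has no continuous upward component---the negative jumps are absorbed into the exponential $\mathbf{C}$-term, while the positive jumps are described by $d\mathbf{K}_0$---the first crossing of level $x$ occurs at a single positive jump. At that jump the pre-jump position $\eta = \xi(\tau^+(x)-)$ satisfies $\eta = x - \gamma_+(x)\le x$, while the jump size equals $\gamma_x^+ = \gamma^+(x)+\gamma_+(x)$. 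I then decompose $\eta = y+z$, where $y \in [0,x)$ is the running maximum $\xi^+(\tau^+(x)-)$ and $z \in (-\infty, 0]$ is the reflected coordinate $\overline{\xi}(\tau^+(x)-)$. Combining the strong Markov property at $\tau^+(x)$ with the memorylessness of $\theta_s$ and the matrix fluctuation identities of \cite{Gusak5}, the joint matrix kernel of $(y,z)$ on the event $\{\tau^+(x)<\theta_s\}$ equals $d\mathbf{P}_+(s,y)\,\Ps\,d\mathbf{P}^-(s,z)$; the middle factor $\Ps$ is the matrix normalization coupling the two marginals. Given $\eta=y+z$, the conditional Laplace transform of the overshooting jump, via $\gamma^+ = J-(x-\eta)$, $\gamma_+ = x-\eta$, $\gamma_x^+ = J$, equals exactly
$$\int_{x-y-z}^{\infty} e^{(u-v)(x-y-z)-(u+\mu)J}\,d\mathbf{K}_0(J) = \mathbf{W}(x-y-z,u,v,\mu).$$
Assembling and recognizing the inner integration in $z$ as $\mathbf{G}_+(s,x-y,u,v,\mu)$ yields (\ref{eq3}).

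To deduce (\ref{eq4}), I split $d\mathbf{P}^-(s,z)$ in the defining integral for $\mathbf{G}_+$ into its atom $\ps$ at $z=0$ and its absolutely continuous part on $(-\infty,0)$. The atom immediately produces $\ps\mathbf{W}(x,u,v,\mu)$, which is the first summand of (\ref{eq4}). For the process with exponential negative jumps, the density of $\mathbf{P}^-(s,\cdot)$ on $(-\infty,0)$ (known from \cite{Gusak5}) involves the matrix exponential $e^{\Rcs z}$ with $\Rcs = \Ps\ps\mathbf{C}$ acting on $\Ps\qs$. Substituting this form, changing variables $\zeta = -z$, and invoking the partial fraction identity
$$\bigl(\Rcs - (u-v)\mathbf{I}\bigr)^{-1}\Bigl[(u+\mu)e^{-(u+\mu)\zeta} - \bigl(\Rcs+(v+\mu)\mathbf{I}\bigr)e^{-(\Rcs+(v+\mu)\mathbf{I})\zeta}\Bigr],$$
which is valid precisely under the spectral hypothesis $u-v \notin \sigma(\Rcs)$, produces the second summand of (\ref{eq4}) after collecting the overall prefactor $\ps\mathbf{C}$ and the shift $e^{-(v+\mu)x}$ obtained from aligning $\mathbf{W}(x-z,u,v,\mu)$ with $\overline{\mathbf{K}}_0(x+\zeta)$.

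The principal obstacle is the rigorous derivation of the joint factorization kernel $d\mathbf{P}_+(s,y)\,\Ps\,d\mathbf{P}^-(s,z)$ in the second paragraph: this lies at the heart of the argument and rests on the matrix Wiener--Hopf identities of \cite{Gusak5}, together with the fact that for the process at hand the supremum up to $\tau^+(x)-$ is attained at an earlier jump time, so that the Markov embedding of $(\xi^+, \overline{\xi})$ at the pre-crossing instant inherits the factorized law valid at $\theta_s$. Once this factorization is in hand, both (\ref{eq3}) and (\ref{eq4}) reduce to bookkeeping and matrix algebra; the only further subtlety is verifying the invertibility condition $u - v \notin \sigma(\Rcs)$ when applying partial fractions.
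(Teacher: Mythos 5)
Your treatment of \eqref{eq4} is essentially the paper's own: split $d\mathbf{P}^-(s,\cdot)$ into the atom $\ps$ at $0$ and the absolutely continuous part with matrix-exponential density coming from $\mathbf{P}^-(s,x)=e^{\ps\mathbf{C}\Ps x}\qs$, then integrate against $\mathbf{W}$ and use the resolvent/partial-fraction identity under the hypothesis $u-v\notin\sigma(\Rcs)$. That part is fine.

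The gap is in your derivation of \eqref{eq3}, and you have correctly located it yourself: the claimed factorization kernel $d\mathbf{P}_+(s,y)\,\Ps\,d\mathbf{P}^-(s,z)$ for the pre-crossing position. As you state it --- that the joint law of $\bigl(\xi^+(\tau^+(x)-),\,\overline{\xi}(\tau^+(x)-)\bigr)$ \emph{conditioned on} $\{\tau^+(x)<\theta_s\}$ ``inherits the factorized law valid at $\theta_s$'' --- the assertion is not correct and cannot be, since it would give $\mathbf{V}=\int_0^x d\mathbf{P}_+(s,y)\Ps\mathbf{G}_+(s,x-y,\ldots)$ without the factor $s$ on the left of \eqref{eq3}; your bookkeeping never produces that factor. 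The correct bridge is not a conditional-law statement but a compensation-formula (L\'evy-system) identity over the jump epochs: writing $\mathbf{V}$ as the expected sum of $e^{-st}$ times the overshoot transform over all positive jump times $t$ with $\xi^+(t-)\le x<\xi(t)$, the compensation formula converts this into
\begin{equation*}
\int_0^\infty e^{-st}\,\mathbf{E}\Bigl[\mathbf{1}\{\xi^+(t)\le x\}\,\mathbf{W}\bigl(x-\xi(t),u,v,\mu\bigr)\Bigr]dt
= s^{-1}\int_0^x d\mathbf{P}_+(s,y)\,\Ps\!\int_{-\infty}^0 d\mathbf{P}^-(s,z)\,\mathbf{W}(x-y-z,u,v,\mu),
\end{equation*}
where the factor $s^{-1}$ arises from replacing the Lebesgue integral $\int_0^\infty e^{-st}(\cdot)\,dt$ by the value at $\theta_s$, and the factorization of the time-$\theta_s$ joint law of $(\xi^+,\xi-\xi^+)$ is the matrix Pecherskii--Rogozin/Wiener--Hopf identity. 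This is precisely what the paper imports wholesale by citing Corollary~3.4 of~\cite{Gusak2}; your proposal re-derives the surrounding structure but leaves the one step that actually requires proof as an acknowledged ``principal obstacle,'' so as written it is not a complete proof of \eqref{eq3}. Either cite the factorization result as the paper does, or supply the compensation-formula argument explicitly (which will also make the factor $s$ appear where it should).
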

\begin{proof} Taking into account the condition of semi-continuity, formula~\eqref{eq3} follows
from~\cite[Corollary~3.4]{Gusak2}. By results
of~\cite{Gusak5}(see~Remark 1) the distribution of $\overline{\xi
}(\theta_s)$ is determined by the relation
\begin{equation*}%\label{eq2a}
\mathbf{P}^{-}(s,x)=e^{\ps\mathbf{C}\Ps x}\qs,\; x<0,
\end{equation*}
then
\begin{multline}\label{eq5}
  \mathbf{G}(s,x,u,v,\mu )=\int_{-\infty }^0d\mathbf{P}^-(s,y)
  \mathbf{W}(x-y,u,v,\mu )=\\
=\, \ps\mathbf{W}(x,u,v,\mu )\\+\ps\mathbf{C}\int_x^{\infty
}e^{\Ps\ps\mathbf{C}(x-y)}\Ps\qs\mathbf{W}(y,u,v,\mu )dy.
\end{multline}
Taking into account the definition of $\mathbf{W}(s,x,u,v,\mu)$ we
deduce
\begin{multline*}
\int_x^{\infty }e^{\mathbf{R}^-_*(s)(x-y)}\Ps\qs\mathbf{W}(y,u,v,\mu
)dy=\\
= -\left(\Ps\ps\mathbf{C}-(u-v)\mathbf{I}\right)^{-1}e^{-(v+\mu
)x}\times\\
%\begin{aligned}
\;\biggl[\left(\Ps\ps\mathbf{C}+(\mu+v)\mathbf{I}\right)
\int_0^{\infty}e^{-\left(\Ps\ps\mathbf{C}+(v+\mu
  )\mathbf{I}\right)z}\Ps\qs\overline{\mathbf{K}}_0(x+z)dz\\
 -(u+\mu
)\int_0^{\infty}e^{-(u+\mu)z}\Ps\qs\overline{\mathbf{K}}_0(x+z)dz\biggr],
%\end{aligned}
\end{multline*}
and from{~\rm{(\ref{eq5})}} we obtain{~\rm{(\ref{eq4})}}.
\end{proof}
Note, that $$\lim_{x\rightarrow -\infty}\mathbf{P}^-(s,x)=
\mathbf{P}\left\{\overline{\xi}(\theta_s)<-\infty\right\}=0.$$

Then from the next formula
$$
  \mathbf{P}^-\left(s,x \right)=e^{\ps\mathbf{C}\Ps x}\qs=
\mathbf{P}_s\Ps e^{\ps\mathbf{C}\Ps x}\mathbf{P}_s\Ps\qs=
\mathbf{P}_s e^{\Rcs x}\left(\mathbf{I}-\Rs \right)
$$
 we get, that the spectrum of matrix $\Rcs$
$\left(\sigma \left( \Rcs\right) \right)$ consists of strictly
positive elements.

Denote $$\gamma _1(x)=\gamma ^+(x),\,\gamma _2(x)=\gamma
_+(x),\,\gamma _3(x)=\gamma ^+_x.$$ Substituting $v=\mu =0, (u\notin
\sigma (\Rcs))$, $u=\mu =0$ and $v=u=0$ in~\eqref{eq3} we find that
\begin{equation*}%\label{eq6a}
\mathbf{E}\,\left [ e^{-s\tau ^+(x)-u\gamma _i(x)}, \tau
^+(x)<\infty\right
]=s^{-1}\int_{0}^{x}d\mathbf{P}_+(s,y)\Ps\mathbf{G}_i(s,x-y,u),\;
i=\overline{1,3};
\end{equation*}
\begin{multline*}
\mathbf{G}_1(s,x,u)=
\ps\int_{x}^{\infty}e^{u(x-z)}d\mathbf{K}_0(z)-\\
  -\ps\mathbf{C}\left(\Rcs-u\mathbf{I} \right)^{-1}
 \int_{0}^{\infty}\left [  ue^{-uz}-\Rcs e^{-\Rcs z}\right
 ]\Ps\qs\overline{\mathbf{K}}_0(x+z)dz;\\
\end{multline*}
%\begin{multline*}
$$ \mathbf{G}_2(s,x,v)=\ps e^{-v
  x}\overline{\mathbf{K}}_0(x)+\ps
  \mathbf{C}e^{-vx}\!\!\int_{0}^{\infty}\!\!
  e^{-\left(\Rcs+v \mathbf{I} \right)z}
  \Ps \qs\overline{\mathbf{K}}_0(x+z)dz;$$
%\end{multline*}
\begin{multline*}
\mathbf{G}_3(s,x,\mu )=\ps\int_{x}^{\infty}e^{-\mu
 z}d\mathbf{K}_0(z)-\\
  -e^{-\mu x}\int_{0}^{\infty}\left [  \mu e^{-\mu z}-\left(\Rcs+\mu \mathbf{I} \right)
  e^{-\left(\Rcs+\mu \mathbf{I} \right)z}\right ]\Ps
  \qs\overline{\mathbf{K}}_0(x+z)dz.
\end{multline*}
After inverting with respect to $u$ we deduce
$$\mathbf{E}\,\left [ e^{-s\tau ^+(x)},\gamma _i(x)\in dz, \tau ^+(x)<\infty\right ]
=s^{-1}\int_{0}^{x}d\mathbf{P}_+(s,y) \Rs
d_{z}\mathbf{g}^*_i(s,x-y,z),
$$
where
$$d_{z}\mathbf{g}^*_i(s,x,z)=d_{z}\mathbf{w}^*_i(x,z)
+\mathbf{C}\int_{x}^{\infty}e^{\Rcs(x-y)}
\left(\mathbf{I}-\Rs\right)d_{z}\mathbf{w}^*_i(y,z)dy,$$
$$d_{z}\mathbf{w}^*_1(x,z)=d_{z}\mathbf{K}_0(x+z), d_{z}\mathbf{w}^*_2(x,z)
=d_{z}I\left\{z>x\right\}
\overline{\mathbf{K}}_0(x),$$
$$
d_{z}\mathbf{w}^*_3(x,z)=I\left\{z\geq x\right\}d\mathbf{K}_0(z).
$$
For the case $x=0$ the next assertion is true.
\begin{theorem}
For process $Z(t)$ with cumulant{~\rm{(\ref{eq1})}}, if $z>0 $ we
have:
\begin{multline}\label{eq19}
\mathbf{E}\left [e^{-s\tau ^+(0)},\,\gamma^+(0)>z,\,\tau
  ^+(0)<\infty \right ]=\\
 \quad \quad =s^{-1}\widetilde{\mathbf{P}}^0(s)\left(\overline{\mathbf{K}}_0(z)
 +\mathbf{C}\int_z^{\infty }e^{(z-y)\Rcs}\Ps\qs\overline{\mathbf{K}}_0(y)dy\right),\\
 \mathbf{E}\left [e^{-s\tau ^+(0)},\,\gamma _+(0)>z,\,\tau
  ^+(0)<\infty \right ]=s^{-1}\widetilde{\mathbf{P}}^0(s)\mathbf{C}\int_z^{\infty }e^{-y\Rcs}
  \Ps\qs\overline{\mathbf{K}}_0(y)dy,\\
\shoveleft { \mathbf{E}\left [e^{-s\tau ^+(0)},\,\gamma ^+_0>z,\,\tau
  ^+(0)<\infty \right ]=}\\
\quad \quad
=s^{-1}\widetilde{\mathbf{P}}^0(s)\left(\overline{\mathbf{K}}_0(z)
+\mathbf{C}\int_z^{\infty }\int_{0}^{y}e^{-x\Rcs}dx\Ps\qs
d\mathbf{K}_0(y)\right).
\end{multline}
\end{theorem}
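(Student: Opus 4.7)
The strategy is to specialize the formula
$$\mathbf{E}\left[e^{-s\tau^+(x)}, \gamma_i(x) \in du, \tau^+(x) < \infty\right] = s^{-1}\int_0^x d\mathbf{P}_+(s, y)\,\Rs\, d_u\mathbf{g}^*_i(s, x-y, u),$$
established just before the theorem, to $x = 0$ and then to integrate over $u > z$ for each $i = 1, 2, 3$.

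First I would note that at $x = 0$ the Stieltjes integral collapses onto the atom of $\mathbf{P}_+(s, \cdot)$ at the origin, whose mass equals $\mathbf{p}_+(s) = \mathbf{P}\{\xi^+(\theta_s) = 0\}$. The key identification, which produces the common prefactor appearing in all three assertions, is the Wiener--Hopf-type factorization
$$\mathbf{p}_+(s)\,\mathbf{P}_s^{-1}\,\ps = \widetilde{\mathbf{P}}^0(s), \qquad \text{equivalently} \qquad \mathbf{p}_+(s)\,\Rs = \widetilde{\mathbf{P}}^0(s),$$
obtained by evaluating the matrix factorization of $\boldsymbol{\Phi}(s, \alpha)$ from~\cite{Gusak5} at $\alpha = 0$. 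Combining these two observations yields
$$\mathbf{E}\left[e^{-s\tau^+(0)}, \gamma_i(0) \in du, \tau^+(0) < \infty\right] = s^{-1}\widetilde{\mathbf{P}}^0(s)\, d_u\mathbf{g}^*_i(s, 0, u),$$
so that the remaining task reduces to computing $\int_z^\infty d_u\mathbf{g}^*_i(s, 0, u)$ for each $i$.

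For the three cases I would substitute the explicit expressions for $d_u\mathbf{w}^*_i$ together with $\mathbf{I} - \Rs = \Ps\qs$. When $i = 1$: $d_u\mathbf{w}^*_1(y, u) = d\mathbf{K}_0(y + u)$, so integrating first in $u$ yields $\overline{\mathbf{K}}_0(y + z)$, and the substitution $w = y + z$ converts the convolution into $\mathbf{C}\int_z^\infty e^{(z - w)\Rcs}\Ps\qs\overline{\mathbf{K}}_0(w)\,dw$, with the $y = 0$ piece supplying the leading $\overline{\mathbf{K}}_0(z)$. When $i = 2$: $d_u\mathbf{w}^*_2(y, u) = \overline{\mathbf{K}}_0(y)\delta_y(du)$ is atomic at $u = y$, so the boundary term vanishes for $z > 0$ and the convolution contribution reads off directly as $\mathbf{C}\int_z^\infty e^{-y\Rcs}\Ps\qs\overline{\mathbf{K}}_0(y)\,dy$. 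When $i = 3$: $d_u\mathbf{w}^*_3(y, u) = I\{u \geq y\}d\mathbf{K}_0(u)$; the $y = 0$ boundary gives $\overline{\mathbf{K}}_0(z)$, while the convolution term must be split according to $y < z$ and $y \geq z$ and then rearranged by Fubini to yield $\mathbf{C}\int_z^\infty\int_0^y e^{-x\Rcs}dx\,\Ps\qs\,d\mathbf{K}_0(y)$.

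I expect the main obstacle to be the $i = 3$ case, where the double integral must be split into two regimes and the order of integration exchanged to recover the precise form stated in the theorem; the cases $i = 1, 2$ and the extraction of the prefactor from $\mathbf{p}_+(s)\Rs$ are essentially mechanical once the factorization identity is granted.
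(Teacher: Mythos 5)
Your proposal is correct and takes essentially the same route as the paper: the paper also reduces everything to Lemma~1, lets $x\to 0$ so that the integral against $d\mathbf{P}_+(s,\cdot)$ collapses to the atom $\mathbf{p}_+(s)$, and invokes the factorization identity $\mathbf{p}_+(s)\Ps\ps=\widetilde{\mathbf{P}}^0(s)$. The only difference is organizational --- the paper works with the Laplace transform in $u$ of the tail probabilities and inverts at the very end, whereas you start from the already-inverted densities $d_z\mathbf{g}^*_i(s,0,z)$ and integrate their tails, which amounts to the same computation.
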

\begin{proof}
From Eq.~\eqref{eq3} it follows that
\begin{equation}\label{eq7a}
  \mathbf{V}\left(s,x,u,v,\mu
  \right)=\overline{\mathbf{P}}_+(s,x)\Ps+
  s^{-1}\int_{0}^{x}d\mathbf{P}_+(s,y)\Ps
  \overline{\mathbf{G}}_+(s,x-y,u,v,\mu ),\;x>0,
\end{equation}
where
$$\overline{\mathbf{G}}_+(s,x,u,v,\mu )=
{\mathbf{G}}_+(s,x,u,v,\mu )-{\mathbf{G}}_+(s,x,0,0,0 ).$$ Taking
into account that for $k=\overline{1,3}$:
\begin{multline*}%\label{eq14}
   \mathbf{V}_k(s,x,u)=\mathbf{E}\left [  e^{-s\tau ^+(x)-u\gamma _k(x)},
   \tau ^+(x)<\infty\right ]=\\
   =\overline{\mathbf{P}}_+(s,x)\Ps
  -u\int_0^{\infty}\!\!\!\!e^{-u z}
  \mathbf{E}\!\!\left [e^{-s\tau ^+(x)},\,\gamma _k(x)>z,\,\tau
  ^+(x)<\infty\! \right ]dz,
\end{multline*}
from~\eqref{eq7a} we obtain
\begin{equation}\label{eq15}
\int_0^{\infty}e^{-u z}\mathbf{E}\left [e^{-s\tau ^+(x)},\,\gamma
_k(x)>z,\,\tau ^+(x)<\infty \right ]dz=-\frac{1}{s u}\int_{-0}^x
d\mathbf{P}_+(s,y)\Ps\overline{\mathbf{G}}_{k}(s,x-y,u),
\end{equation}
where
\begin{gather}
\overline{\mathbf{G}}_k(s,x,u)=\;\ps
\overline{\mathbf{W}}_k(x,u)+\int_{-\infty }^{0-}
d\mathbf{P}^-(s,y)\overline{\mathbf{W}}_k(x-y,u),\label{eq16}\\
\overline{\mathbf{W}}_1(x,u)=\int_x^{\infty
}\left(e^{u(x-z)}-\mathbf{I}\right)d\mathbf{K}_0(z),
\overline{\mathbf{W}}_2(x,u)=\left(e^{-u x}-\mathbf{I}\right)
\overline{\mathbf{K}}_0(x),\label{eq17}\\
\overline{\mathbf{W}}_3(x,u)=\int_x^{\infty }\left(e^{-u
z}-\mathbf{I}\right)d\mathbf{K}_0(z).\notag
\end{gather}
After the limit passage as $x\rightarrow 0$ from{~\rm{(\ref{eq15})}}
we get
\begin{equation}\label{eq18}
\int_0^{\infty}e^{-u z}\mathbf{E}\left [e^{-s\tau ^+(0)},\,\gamma
_k(0)>z,\,\tau ^+(0)<\infty \right ]dz=-\frac{1}{s
u}\mathbf{p}_+(s)\Ps\overline{\mathbf{G}}_k(s,0,u).
\end{equation}

Substituting $x=0$ in{~\rm{(\ref{eq16})}} and using
{~\rm{(\ref{eq17})}} we get
\begin{multline*}
\overline{\mathbf{G}}_1(s,0,u)=-u \ps\biggl(\int_0^{\infty}e^{-u
z}\overline{\mathbf{K}}_0(z)dz+\\+\mathbf{C}\int_0^{\infty}e^{-u
z}\int_z^{\infty }e^{(z-y)\Rcs}\Ps\qs\overline{\mathbf{K}}_0(y)dy dz\biggr),\\
\shoveleft{\overline{\mathbf{G}}_2(s,0,u)=-u
\ps\mathbf{C}\int_0^{\infty}e^{-u
z}\int_z^{\infty }e^{-y\Rcs}\Ps\qs\overline{\mathbf{K}}_0(y)dy dz,}\\
\shoveleft{\overline{\mathbf{G}}_3(s,0,u)=
-u\ps\biggl(\int_0^{\infty}e^{-u z}\overline{\mathbf{K}}_0(z)dz}+\\
\qquad\qquad+\mathbf{C}\int_0^{\infty}e^{-u
z}\int_z^{\infty }\int_{0}^{y}e^{-x\Rcs}dx\Ps\qs
d\mathbf{K}_0(y)dz\biggl).
\end{multline*}
Substituting $\overline{\mathbf{G}}_k(s,0,u)$ in{~\rm{(\ref{eq18})}}
and using the relation $\mathbf{p}_+(s)\Ps\mathbf{p}^-(s)
=\widetilde{\mathbf{P}}^0(s)$(see~\cite[p.47]{Gusak5}
with~\cite[Remark 1]{Gusak5}),
 after inversion with respect to $u$ we receive{~\rm{(\ref{eq19})}}.
\end{proof}
Consider some corollaries of Theorem 1 and results
of~\cite{Asmussen}, namely, the analog of the inverted
Pollaczeck-Khinchine formula and two-sided Lundberg's inequality.
Assume hereinafter that  $\chi _{kr}=0$, $k,r=\overline{1,m}$. The
almost semi-continuous processes that satisfy such conditions we can
consider as surplus risk processes with stochastic premium function
in a Markov environment.

 Let $\zeta ^*$ is the moment of the first
jump of $\xi (t)$. We have the next stochastic relations
(see~\cite[p.42]{Gusak2})
\begin{equation*}
  \zeta ^*_{kr}\dot{=}
  \begin{cases}
    \zeta _k +\zeta ^*_{jr}& \zeta '_k>\zeta _k, x(\zeta _k)=j; \\
    \zeta '_k & \zeta '_k<\zeta _k,
  \end{cases}
\end{equation*}
where indices $kr$ means that $x(\zeta ^*)=r$, $x(0)=k$
$(k,r=\overline{1,m})$. Taking into account the definition of
$Z(t)$, these relations yield (see~\cite[p.64]{Gusak2})
\begin{multline*}
  \mathrm{E}e^{-s\zeta ^*_{kr}}=\mathrm{E}\,\left[e^{-s\zeta ^*}, x(\zeta ^*)=r/x(0)=k
  \right]=\\
  =\mathrm{E}\,\left[ e^{-s\zeta '_k},\zeta '_k<\zeta _k\right]\delta
  _{kr}+\sum_{j=1}^{m}\mathrm{E}\,
  \left[e^{-s\zeta ^*_{jr}+\zeta _k},\zeta '_k>\zeta _k, x(\zeta _k)=j
  \right]\\
  =\int_{0}^{\infty}\lambda _ke^{-sy}e^{-\lambda _k y}e^{-\nu _k y}dy\delta
  _{kr}+\sum_{j=1}^{m}\int_{0}^{\infty}e^{-sy}\nu _ke^{-\nu _k y}e^{-\lambda _k
  y}\mathrm{E}e^{-s\zeta ^*_{jr}}p_{kj}dy\\
  =\lambda _k\left(s+\lambda _k+\nu _k \right)^{-1}\delta _{kr}+\sum_{j=1}^{m}\nu
  _k(s+\nu _k+\lambda _k)^{-1}p_{kj}\mathrm{E}e^{-s\zeta ^*_{jr}},
\end{multline*}
or in a matrix form
$$\mathbf{E}e^{-s\zeta ^*}=\boldsymbol{\Lambda}
\left(s\mathbf{I}+\boldsymbol{\Lambda}+\mathbf{N} \right)^{-1}
+\left(s\mathbf{I}+\boldsymbol{\Lambda}+\mathbf{N}
\right)^{-1}\mathbf{N}\mathbf{P}\,\mathbf{E}e^{-s\zeta ^*}.
$$
Whence
\begin{equation*}%\label{eq19_1}
  \mathbf{E}e^{-s\zeta ^*}=\left(s\mathbf{I}+\boldsymbol{\Lambda}-\mathbf{Q} \right)^{-1}
  \boldsymbol{\Lambda}.
\end{equation*}
Taking into account that $
  \widetilde{\mathbf{P}}^0(s)=
  \left(\mathbf{I}-\mathbf{E}\,e^{-s\zeta ^*} \right)\mathbf{P}_s,
$ we get
\begin{equation*}%\label{eq19a}
  \lim_{s\rightarrow 0}s^{-1}\widetilde{\mathbf{P}}^0(s)=\left(
  \boldsymbol{\Lambda}-\mathbf{Q}\right)^{-1}=\|\mathrm{P}\left\{x(\zeta
  ^*)=r/x(0)=k\right\}\|\boldsymbol{\Lambda}^{-1}.
\end{equation*}
Denote
$$
  m_1^0=\sum_{k=1}^m \pi_k\int_{R}x \lambda _kF^0_k(dx).
$$
and assume that $m_1^0<\infty$.
\begin{corollary}
For $m_1^0<0$
\begin{equation}\label{eq20}
  1-\psi _i(u)=\mathrm{P}\left\{\xi ^+\leq u/x(0)=i\right\}=
  \mathrm{P}_i\left\{\xi ^+\leq u\right\}=\mathbf{e}'_i
  \sum_{n=0}^{\infty }\mathbf{G}^{*n}_+(u)
  \left(\mathbf{I}-\left |\left |\mathbf{G}\right |\right |\right)\mathbf{e},
\end{equation}
\begin{multline}\label{eq21}
  \mathbf{G}_+(u)=\int_{0}^{u}\mathbf{G}_+(dy)=
  \int_{0}^{u}\mathbf{P}\left\{\gamma ^+(0)\in dy, \tau ^+(0)<\infty\right\}=\\
  =\left(\boldsymbol{\Lambda}-\mathbf{Q}\right)^{-1}\int_{0}^{u}
  \left(\boldsymbol{\Lambda}{\mathbf{F}}_0(dy)
  +\mathbf{C}\int_{-\infty }^0 e^{\Ro\mathbf{C}x}\left(\mathbf{I}-\Ro\right)
  \boldsymbol{\Lambda}
  {\mathbf{F}}_0(dy-x)dx\right),
\end{multline}
$\mathbf{G}_+^{*n}(u)$ -- $n$ - fold convolution of
$\mathbf{G}_+(u)$ with itself, $ \left |\left |\mathbf{G}\right
|\right |=\int_0^{\infty}\mathbf{G}_+(dx)$, $\mathbf{e}
=\left(1,\ldots,1\right)'$(column vector),
$\mathbf{e}'_i=\left(0,\ldots,\stackrel{i}{1},\ldots,0\right)$(row
vector).
\end{corollary}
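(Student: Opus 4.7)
The plan is to split the corollary into two parts: first, derive the explicit matrix $\mathbf{G}_+(u)$ of \eqref{eq21} by passing to the limit $s\to0$ in the first identity of Theorem~1; second, derive the Pollaczeck--Khinchine series \eqref{eq20} by invoking the standard strong-Markov decomposition of $\xi^+$ into a matrix-geometric sum of ladder heights, as in~\cite{Asmussen}.

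For \eqref{eq21}, I would start from the first identity of Theorem~1 and let $s\to0$. The prefactor $s^{-1}\widetilde{\mathbf{P}}^0(s)\to(\boldsymbol{\Lambda}-\mathbf{Q})^{-1}$ by the computation already performed just above the statement of the corollary. Using $\Rs=\Ps\ps\to\Ro$, we have $\Rcs\to\Ro\mathbf{C}$ and $\Ps\qs=\mathbf{I}-\Rs\to\mathbf{I}-\Ro$. Under the hypothesis $\chi_{kr}=0$, the jump measure $d\mathbf{K}_0(x)=\mathbf{N}d\mathbf{F}(x)+\boldsymbol{\Lambda}d\mathbf{F}_0(x)$ collapses on $(0,\infty)$ to $\boldsymbol{\Lambda}\mathbf{F}_0(dx)$, and therefore $\overline{\mathbf{K}}_0(x)=\boldsymbol{\Lambda}\overline{\mathbf{F}}_0(x)$. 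Substituting these limits into Theorem~1, differentiating in $z$ to extract the density $\mathbf{G}_+(dy)=-d_z\mathbf{P}\{\gamma^+(0)>z,\,\tau^+(0)<\infty\}\big|_{z=y}$, integrating from $0$ to $u$, and performing the change of variable $x=-t$ inside the inner integral should reproduce \eqref{eq21} exactly.

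For \eqref{eq20}, once $\mathbf{G}_+(dy)$ is identified as the joint law of the phase at and the height of the first ascending ladder epoch, the strong Markov property applied iteratively at each ladder epoch decomposes $\xi^+$ into a sum of conditionally independent ladder heights indexed by a matrix-geometric number of steps. Because $m_1^0<0$ the Markov-modulated process drifts to $-\infty$, so $\xi^+<\infty$ almost surely and the spectral radius of $\|\mathbf{G}\|$ is strictly less than one; the resulting convolution-geometric identity is precisely \eqref{eq20}, in agreement with the Markov-modulated Pollaczeck--Khinchine formula of~\cite{Asmussen}.

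The most delicate point will be the justification of the interchange of $\lim_{s\to0}$ with the matrix integrals containing $e^{(z-y)\Rcs}$. This is supplied by the observation made right after the proof of the lemma that $\sigma(\Rcs)\subset(0,\infty)$, so the eigenvalues remain uniformly bounded away from the imaginary axis for small $s$ and the matrix-exponential kernels converge in a dominated fashion. Once this spectral control is in place, the remainder is routine bookkeeping of matrix products, and the convergence of the series $\sum_n\mathbf{G}_+^{*n}(u)$ in \eqref{eq20} follows from $\|\mathbf{G}\|$ having spectral radius strictly less than one under the negative-drift hypothesis.
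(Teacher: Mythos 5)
Your proposal is correct and follows essentially the same route as the paper: the paper likewise obtains \eqref{eq20} by appealing to the ladder-height (Pollaczeck--Khinchine) argument of Asmussen's Proposition~2.2, noting only that it extends to processes for which $\{\tau^+(0),\gamma^+(0)\}$ is nondegenerate, and obtains \eqref{eq21} by passing to the limit $s\to 0$ in the first formula of \eqref{eq19}. Your version merely spells out the limits $s^{-1}\widetilde{\mathbf{P}}^0(s)\to(\boldsymbol{\Lambda}-\mathbf{Q})^{-1}$, $\mathbf{P}_s^{-1}\ad{\mathbf{q}}_-(s)\to\mathbf{I}-\mathbf{p}^-_*(0)$, and the reduction of $d\mathbf{K}_0$ to $\boldsymbol{\Lambda}\mathbf{F}_0(dx)$ under $\chi_{kr}=0$, which the paper leaves implicit.
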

\begin{proof}
Formula{~\rm{(\ref{eq20})}} were obtained in~\cite{Asmussen}(see the
proof of Proposition 2.2) for the processes, that intersect negative
level continuously. However, the proof is also true for the
processes for which pair $\left\{\tau ^+(0),\gamma ^+(0)\right\}$
has nondegenerate distribution. Since $Z(t)$ is the stepwise
process, then formula~\eqref{eq20} holds for our process.
Formula{~\rm{(\ref{eq21})}} follows from the first formula
in{~\rm{(\ref{eq19})}}.
\end{proof}
Let $k(r)$ be the real eigenvalue with maximal absolute value
(Perron's root) of the matrix $\mathbf{K}(r)=\boldsymbol{\Psi
}(-\imath r )$. Suppose that a solution $\gamma>0 $ of the equation
$k(r)=0$ exists and $\boldsymbol{\nu }=\left(\nu _1,\ldots,\nu
_m\right)$, $\mathbf{h}=\left(h_1,\ldots,h_m\right)'$ are
corresponding left and right eigenvectors of the matrix
$\mathbf{K}(\gamma )$. We assume that vectors $\boldsymbol{\nu
},\mathbf{h}$ have strictly positive elements and
 $ \boldsymbol{\nu }\mathbf{h}=1$(see~\cite[p.42]{Asmussen}). Denote
\begin{equation*}%\label{eq22}
C_+=\max_{j\epsilon E'}\frac{1}{h_j}\sup_{x\geq
0}\frac{\overline{F^0_j}(x)}{\int_x^{\infty }e^{\gamma (y-x)}F^0_j(dy)},
\quad C_-=\min_{j\epsilon
E'}\frac{1}{h_j}\inf_{x\geq 0}\frac{\overline{F^0_j}(x)}{\int_x^{\infty }e^{\gamma
(y-x)}F^0_j(dy)}.
\end{equation*}
\begin{corollary}
If $m_1^0<0$, then for all $i\,\epsilon \,E'$ and all $u\geq 0$
\begin{equation}\label{eq23}C_-h_ie^{-\gamma
u}\leq \psi _i(u)\leq C_+h_ie^{-\gamma u}.
\end{equation}
\end{corollary}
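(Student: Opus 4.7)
The plan is to carry out the classical martingale approach to Lundberg's inequality, in the Markov-modulated form of~\cite{Asmussen}. The starting observation is that the Perron relation $\mathbf{K}(\gamma)\mathbf{h}=k(\gamma)\mathbf{h}=\mathbf{0}$, combined with the identity $\mathrm{E}_i[e^{\gamma\xi(t)};x(t)=j]=(e^{t\mathbf{K}(\gamma)})_{ij}$, gives
$$\mathrm{E}_i\bigl[h_{x(t)}e^{\gamma\xi(t)}\bigr]=\bigl(e^{t\mathbf{K}(\gamma)}\mathbf{h}\bigr)_i=h_i,$$
so that $M_t:=h_{x(t)}e^{\gamma\xi(t)}$ is a positive $\mathrm{P}_i$-martingale with constant mean $h_i$. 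Since the assumption $m_1^0<0$ forces $\xi(t)\to-\infty$ $\mathrm{P}_i$-almost surely, one has $M_t\to 0$ on the event $\{\tau^+(u)=\infty\}$.

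I would then apply the optional stopping theorem at $\tau^+(u)$. The only delicate point is integrability at the crossing instant, which reduces to the condition $\gamma<\min_k c_k$ that is implicit in the existence of the positive root of $k(\cdot)=0$. Optional stopping then yields
$$h_i=\mathrm{E}_i\bigl[M_{\tau^+(u)};\tau^+(u)<\infty\bigr]=e^{\gamma u}\,\mathrm{E}_i\bigl[h_{x(\tau^+(u))}e^{\gamma\gamma^+(u)};\tau^+(u)<\infty\bigr],$$
and consequently
$$\psi_i(u)=\frac{h_i\,e^{-\gamma u}}{\mathrm{E}_i\bigl[h_{x(\tau^+(u))}e^{\gamma\gamma^+(u)}\bigm|\tau^+(u)<\infty\bigr]}.$$

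It then suffices to sandwich the conditional expectation in the denominator between $1/C_+$ and $1/C_-$. By the strong Markov property applied at the jump epoch realising $\tau^+(u)$, conditional on $x(\tau^+(u))=j$ and on the pre-crossing deficit $y:=u-\xi(\tau^+(u)-)\geq 0$, the overshoot $\gamma^+(u)$ is distributed as $J-y$ given $J>y$ with $J\sim F^0_j$. Therefore
$$h_j\,\mathrm{E}\bigl[e^{\gamma\gamma^+(u)}\bigm|j,y\bigr]=\frac{h_j\int_y^\infty e^{\gamma(z-y)}F^0_j(dz)}{\overline{F^0_j}(y)},$$
and the definition of $C_\pm$ directly yields the pointwise bounds $1/C_+\leq h_j\,\mathrm{E}[e^{\gamma\gamma^+(u)}\mid j,y]\leq 1/C_-$, uniformly in $j$ and $y$. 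These bounds survive averaging against the joint law of $(x(\tau^+(u)),y)$ under $\mathrm{P}_i(\,\cdot\,\mid\tau^+(u)<\infty)$, and combined with the identity above deliver exactly~(\ref{eq23}).

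The main obstacle I anticipate is making the two probabilistic identifications at the crossing fully rigorous: the uniform integrability required for optional stopping, and the exact conditional law of the overshoot--deficit pair $(\gamma^+(u),\xi(\tau^+(u)-))$ together with the chain state $x(\tau^+(u))$. Both points rest on the strong Markov property of $Z(t)$ and on the explicit jump structure coming from the cumulant~(\ref{eq1}); once they are in place the two-sided Lundberg bound follows from the purely algebraic manipulation above.
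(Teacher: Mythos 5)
Your argument is correct and is essentially the proof the paper invokes: the paper disposes of this corollary by simply citing the proof of Theorem 3.11 in Asmussen's \emph{Ruin Probabilities}, which is precisely the exponential-martingale / optional-stopping argument with the overshoot term sandwiched via the constants $C_\pm$ that you reconstruct. (One minor quibble: the integrability needed at the crossing instant is controlled by $\int_0^\infty e^{\gamma y}F^0_j(dy)<\infty$, already implicit in the finiteness of $\mathbf{K}(\gamma)$, rather than by $\gamma<\min_k c_k$, since the $c_k$ parametrize the exponential jumps in the direction of continuous crossing, not the overshooting ones.)
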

\begin{proof}
See the proof of Theorem 3.11~\cite{Asmussen}.
\end{proof}

\par {\bf{Example.}} Let ${Z}(t)=\left\{ {\xi}(t),x(t)\right\}$
be the process on a Markov chain $x(t)$ with infinitesimal matrix: $
\mathbf{Q}=\left(
\begin{matrix}
-1&1\\
1&-1
\end{matrix}\right).$
\par We assume, that $\chi _{kr}=0;\;k,r=1,2$, and component $\xi (t)$ has
 the next representation:
$$\xi _i(t)=S_i(t)-S'_i(t)=\sum_{k\leq\nu'_i (t)}
\eta _k^i-\sum_{k\leq\nu_i (t)}\xi^{i} _k,\; \text{ if } x(t)=i,\;
i=1,2,$$
where $S_i(t),\,S'_i(t)$ are compound Poisson processes with the
rates $\lambda _i,\lambda'_i=1$ and jumps $\xi^{i} _k,\,\eta _k^i
>0$, correspondingly. Moreover,
\begin{gather*}
\mathrm{P}\left\{\xi _k^i>x\right\}=e^{-c_i x},\quad
\frac{\partial}{\partial x}\mathrm{P}\left\{\eta_k^{i} <x\right\}
={\delta_i}^{2} x\, e^{-\delta_i x},\; x\geq 0,\quad i=1,2;\\
c_1=\frac{1}{3},\,c_2=\frac{1}{2},\,\delta _1=2,\,\delta _2=1.
\end{gather*}
Let's find the distribution of absolute maximum, which defines the
ruin probabilities and the distributions of overshoots for zero
level.
\par Consider auxiliary process  $Z_1(t)=\left\{\xi_1 (t),x(t)\right\}=\left\{-\xi
(t),x(t)\right\}$, with cumulant
$$\boldsymbol{\Psi}_1(-\imath r)=\left(
\begin{matrix}
-\frac{9r^3+34r^2+16r
-4}
 {\left(3r-1\right)\left(r+2\right)^2}&1\\
1&-\frac{6r^3+10r^2-1}{(2r-1)(r+1)^2}
\end{matrix}\right).$$
In our case the stationary distribution is defined by
$\boldsymbol{\pi} =\left(\frac{1}{2},\frac{1}{2}\right)$, then
$m_1^0=1>0$. Accordingly to~\cite[Theorem 3]{Gusak5} $\xi_1 ^-$ has
nondegenerate distribution. Consider the matrix
$$\mathbf{G}(r):=r\boldsymbol{\Psi }_1^{-1}(-\imath r)
\left(\mathbf{C}-r\mathbf{I}\right)^{-1}=\frac{1}{D(r)}\left(
\begin{matrix}
g_{11}(r)&g_{12}(r)\\
g_{21}(r)&g_{22}(r)
\end{matrix}\right),$$
where $D(r)=48\,r^5+263\,r^4+387\,r^3+114\,r^2-51\,r-8,\quad
g_{11}(r)=3(r+2)^2(6r^3+10r^2-1)$,\quad$g_{12}(r)=2(r+1)^2(r+2)^2(3r-1)$,\quad
 $g_{21}(r)=3(r+1)^2(r+2)^2(2r-1)$,
$g_{22}(r)=2(r+1)^2(9r^3+34r^2+16r-4)$. Since equation $D(r)=0$ has
four negative roots:

 $-\rho_1 =-3.25672$,\;
$-\rho _2=-1.59682$,\; $-\rho _3=-0.794382$,\; $-\rho _4=-0.133485$

\noindent and one positive $r_0 =0.30224$, then the elements of
matrix $\mathbf{G}(r)$ we can represent in the next form
$$G_{ij}(r)=C_{ij}^0+\frac{C_{ij}^1}{r+\rho _1}+\frac{C_{ij}^2}{r+\rho _2}
  +\frac{C_{ij}^3}{r+\rho _3}+\frac{C_{ij}^4}{r+\rho _4}+\frac{C_{ij}^5}{r-r_0}.$$
\par Use the projection operation (see~\cite[p.34]{Gusak2}), which for functions
$$\mathbf{G}(r)=\mathbf{C}_0 +\int_{-\infty}^{\infty}
e^{rx}\mathbf{g}(x)dx$$ is defined as follows $$\left [
\mathbf{G}(r)\right ]^-=\int_{-\infty}^{0}e^{rx}\mathbf{g}(x)dx,$$
 then
$$G_{ij}^-(r)=\left[G_{ij}(r)\right]^{-}=\frac{C_{ij}^1}{r+\rho _1}
  +\frac{C_{ij}^2}{r+\rho _2}+\frac{C_{ij}^3}{r+\rho _3}+\frac{C_{ij}^4}{r+\rho _4}.$$
Since
$$\ad{\mathbf{R}}_+=\left(\mathbf{G}^-(0)+\left(\boldsymbol{\Lambda
}-\mathbf{Q}\right)^{-1}\right)^{-1}\mathbf{P}_0=\left(
\begin{matrix}
0.22&0.22\\
0.17&0.17
\end{matrix}\right),$$
then according to~\cite[Theorem 3]{Gusak5}
\begin{equation*}
\begin{split}
\mathrm{E}_i\left[e^{r\xi_1 ^-},\xi_1
^-<0\right]&=\mathrm{E}\left[e^{r\xi_1 ^-},\xi_1
^-<0/x(0)=i\right]=\mathbf{E}\left[e^{r\xi_1 ^-},\xi_1 ^-<0\right]
\cdot\mathbf{e}=\\
&=\left[\mathbf{G}(r)\right]^-\ad{\mathbf{R}}_+ \cdot \;\mathbf{e}
=\frac{A_i^1}{r+\rho _1}+\frac{A_i^2}{r+\rho _2}+\frac{A_i^3}{r+\rho
_3}+\frac{A_i^4}{r+\rho _4},\quad i=1,2;
\end{split}
\end{equation*}
where $\mathbf{e}=(1,1)'$. Inverting the last relation with respect
to $r$, we can determine the distribution of $\xi_1 ^-$ as follows:
$$\mathrm{P}_i\left\{\xi_1 ^-<x\right\}=\mathrm{P}\left\{\xi_1 ^-<x/x(0)=i\right\}
=\sum_{k\leq 4}\frac{A_i^k}{\rho _k}e^{\rho _k x},\quad x<0.
$$
That is, we have the next representations of ruin probabilities
\begin{equation*}
\begin{split}
\psi _1(u)&=\mathrm{P}_1\left\{\xi ^+>u\right\}
=\mathrm{P}_1\left\{\xi_1 ^-<-u\right\}\approx\\
 &\approx-0.04\,e^{-3.26 u}+0.001\,e^{-1.6 u}+0.079\,e^{-0.79 u}+0.75\,e^{-0.13 u},
\end{split}
\end{equation*}
\begin{equation*}
\begin{split}
\psi _2(u)&=\mathrm{P}_2\left\{\xi ^+>u\right\}
=\mathrm{P}_2\left\{\xi_1 ^-<-u\right\}\approx\\
&\approx-0.01\,e^{-3.26 u}-0.016\,e^{-1.6 u}+0.004\,e^{-0.79 u}+0.85\,e^{-0.13 u}.
\end{split}
\end{equation*}
From other side, we can use inequalities~\eqref{eq23}:
\begin{gather*}
  0.665 e^{-0.13 u}\leq \psi _1(u)\leq 0.935 e^{-0.13 u},\\
  0.757 e^{-0.13 u}\leq \psi _2(u)\leq 1.064 e^{-0.13 u}.
\end{gather*}
Moreover, form~\eqref{eq19} as $s\rightarrow 0$ we receive
\begin{equation*}
\begin{split}
  \mathbf{P}\left\{\gamma ^{+}(0)>z,\tau ^+(0)<\infty\right\}\approx&
  \left(\begin{matrix}
    e^{-2 z}(0.48+ 0.86z)&e^{-z}(0.31 + 0.22 z)\\
    e^{-2 z}(0.21 + 0.34 z)&e^{-z}(0.61+ 0.49 z)
  \end{matrix}\right),\\
  \mathbf{P}\left\{\gamma _{+}(0)>z,\tau ^+(0)<\infty\right\}\approx&
  \left(\begin{matrix}
  0.1e^{-2 z}(1+z)&e^{-z}(0.2 + 0.1 z)\\
  0.09e^{-2 z}(1+z)&e^{-z}(0.18 + 0.09 z)
  \end{matrix}\right)+\\
  &+
  \left(\begin{matrix}
  e^{-2.3 z}(0.0016 + 0.002 z)&-e^{-1.3 z}(0.02 + 0.013 z)\\
  -e^{-2.3 z}(0.004 + 0.004 z)&e^{-1.3 z}(0.05 + 0.03z)
  \end{matrix}\right),\\
  \mathbf{P}\left\{\gamma ^{+}_0>z,\tau ^+(0)<\infty\right\}\approx&
  \left(\begin{matrix}
  e^{-2 z}(0.49+ 0.97 z+ 0.2z^2)&e^{-z}(0.3(1+z)+0.1z^2)\\
  e^{-2 z}(0.2+ 0.4 z+ 0.18z^2)&e^{-z}(0.7(1+z)+0.09z^2)
  \end{matrix}\right)+\\
  &+
  \left(\begin{matrix}
  -e^{-2.3 z}(0.005 + 0.01z)&e^{-1.3 z}(0.03 + 0.04z)\\
  e^{-2.3 z}(0.01 + 0.03z)&-e^{-1.3 z}(0.075 + 0.1 z)
  \end{matrix}\right).
\end{split}
\end{equation*}
\par\smallskip

\end{document}